\newtheorem{thm}{Theorem}[section]
\newtheorem*{thm*}{Theorem}
\newtheorem{lemma}[thm]{Lemma}
\newtheorem{corollary}[thm]{Corollary}
\newtheorem{prop}[thm]{Proposition}
\newtheorem*{prop*}{Proposition}
\newtheorem{remark}[thm]{Remark}
\numberwithin{equation}{section}
\def\N{\mathbb N}
\def\br{\mathbf r}
\newcommand{\Z}{\mathbb{Z}}
\def \E{\mathbb E}
\def \R{\mathbb R}
\def \E1{\mathcal E}
\def \T{\mathcal T}
\def\T1{\mathbb T}
\def\ga{\gamma}
\def\t{\widetilde}
\def \m{\vec}
\def\mi{{\vec{i}}}
\def\mj{{\vec{j}}}
\def\mt{\textbf{m}}
\def\tg{g}
\def\rT{\textrm{T}}
\def\emp{\varnothing}
\def\.{\hskip.06cm}
\def\ts{\hskip.03cm}
\def\nin{\noindent}
\def\Hom{\textup{\textrm{Hom}}}
\def\Cull{\textup{\textrm{Ext}}}
\def\St{\textup{\textrm{St}}}
\def\NP{\textup{\textsf{NP}}}
\def\OO{O}
\title{Kirszbraun-Type Theorems for Graphs}
\author{
Nishant Chandgotia
\and
Igor Pak
\and
Martin Tassy
}\address{School of Mathematical Sciences\\
Tel Aviv University,
Israel}
\email {nishant.chandgotia@gmail.com}
\address{Mathematics Department\\
Unversity of California, Los Angeles, CA}
\email{pak@math.ucla.edu}
\address{Mathematics Department\\
Dartmouth College, Hanover, NH}
\email{martintassy@gmail.com}
\subjclass[2010]{05C60, 54C20}
\keywords{Kirszbraun theorem, Helly theorem, graph homomorphism, Lipschitz map}
\begin{document}
\maketitle

\begin{abstract} The classical \emph{Kirszbraun theorem} says that all $1$-Lipschitz
functions $f:A\longrightarrow \R^n$, $A\subset \R^n$, with the Euclidean metric have
a $1$-Lipschitz extension to $\R^n$. For metric spaces $X,Y$ we say that $Y$ is
\emph{$X$-Kirszbraun} if all $1$-Lipschitz functions $f:A\longrightarrow Y$,
$A\subset X$, have a $1$-Lipschitz extension to~$X$. We analyze the case when $X$ and $Y$ are
graphs with the usual path metric.  We prove that $\Z^d$-Kirszbraun graphs are exactly
graphs that satisfies a certain \emph{Helly's property}.  We also consider complexity
aspects of these properties.
\end{abstract}

\

\vskip.7cm

\section{Introduction}\label{section:intro}

\nin
Discretizing results in metric geometry is important for many applications,
ranging from discrete differential geometry to numerical methods.  The discrete
results are stronger as they typically imply the continuous results in the limit.
Unfortunately, more often than not, straightforward discretizations fall apart;
new tools and ideas are needed to even formulate these extensions; see e.g.~\cite{BS,Lin}
and~\cite[$\S$21--$\S$24]{Pak}.

In this paper we introduce a new notion of \emph{$G$-Kirszbraun graphs},
where~$G$ is vertex-transitive graph.  The idea is to discretize the classical
\emph{Kirszbraun theorem} in metric geometry~\cite{kirszbraun1934}
(see also~\cite[$\S$1.2]{BL}).  Our main goal
is to explain the variational principle for the height functions of tilings introduced
by the third author in~\cite{Tas} and further developed in~\cite{MR3530972,TassyMenz2016}
(see Section~\ref{section: motivation}); we also aim to lay a proper foundation
for the future work.

Our second goal is to clarify the connection to the \emph{Helly theorem},
a foundational result in convex and discrete geometry~\cite{helly1923}
(see also~\cite{MR0157289,Mat}).  Graphs that satisfy the \emph{Helly's property}
has been intensely studied in recent years~\cite{MR2405677}, and we establish
a connection between two areas.   Roughly, we show that $\Z^d$-Kirszbraun
graphs are somewhat rare, and are exactly the graphs that satisfy the
Helly's property with certain parameters.

\smallskip

\subsection{Main results}
Let $\ell_2$ denote the usual Euclidean metric on $\R^n$ for all $n$. Given a metric
space~$X$ and a subset $A$, we write $A\subset X$ to mean that the subset~$A$ is
endowed with the restricted metric from~$X$. The \emph{Kirszbraun theorem} says
that for all \ts $A\subset (\R^n,\ell_2)$, and all Lipschitz functions \ts
$f: A \longrightarrow (\R^n,\ell_2)$, there is an extension to a
Lipschitz function on $\R^n$ with the same Lipschitz constant.

Recall now the \emph{Helly theorem}: Suppose a collection of convex sets
$B_1, B_2, \ldots, B_k$ satisfies the property that every $(n+1)$-subcollection has a
nonempty intersection, then $\cap B_i\neq \emp$.  Valentine in~\cite{MR0011702}
famously showed how the Helly theorem can be used to obtain the Kirszbraun theorem.
The connection between these two theorems is the key motivation behind this paper.

\smallskip

Given metric spaces $X$ and $Y$, we say that $Y$ is \emph{$X$-Kirszbraun}
if all $A\subset X$, every $1$-Lipschitz maps \ts $f: A \longrightarrow Y$ \ts
has a $1$-Lipschitz extension from~$A$ to~$X$. In this notation, the Kirszbraun
theorem says that \ts $(\R^n,\ell_2)$ \ts is \ts $(\R^n,\ell_2)$-Kirszbraun.

Let $m\in \N$ and $n\in \N\cup\{\infty\}$, $n>m$.  Metric space $X$
is said to have \emph{$(n, m)$-Helly's property} if for every collection of closed balls
$B_1, B_2, \ldots, B_n$ of radius~$\ge~1$ whenever every $m$-subcollection has a
nonempty intersection, we have $\cap_{i=1}^n B_i\neq \emp$.
Since balls in $\R^n$ with the Euclidean metric are convex,
the Helly theorem can be restated to say that $(\R^n,\ell_2)$ is $(\infty, n+1)$-Helly.
Note that the metric is important here, i.e.\
$(\R^n,\ell_\infty)$ is $(\infty, 2)$-Helly, see e.g.~\cite{MR0157289,Pak}.

Given a graph $H$, we endow the set of vertices (also denoted by $H$) with the path metric.
By~$\Z^d$ we mean the Cayley graph of the group $\Z^d$ with respect to standard generators.
All graphs in this paper are nonempty, connected and simple (no loops and multiple edges).
The following is the main result of this paper.

\begin{thm}[Main theorem]\label{thm:Zd Kirszbraun}
Graph $H$ is $\Z^d$-Kirszbraun if and only if $H$ is $(2d, 2)$-Helly.\label{theorem: main result}
\end{thm}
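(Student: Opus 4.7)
Given $2d$ pairwise-intersecting closed balls $B_i = B(c_i, r_i)$ in $H$ with $r_i \ge 1$, enumerate the $2d$ signed standard basis vectors of $\Z^d$ as $u_1, \ldots, u_{2d}$ and set $p_i := r_i u_i \in \Z^d$. Any two such vectors are either orthogonal or antipodal, so a direct computation gives $d_{\Z^d}(p_i, p_j) = r_i + r_j$ for $i \ne j$; pairwise intersection of balls yields $d_H(c_i, c_j) \le r_i + r_j$. Hence $f(p_i) := c_i$ defines a $1$-Lipschitz map $f:\{p_1, \ldots, p_{2d}\} \to H$. By the Kirszbraun hypothesis, $f$ extends to a $1$-Lipschitz $\tilde f:\Z^d \to H$, and then $\tilde f(0) \in \bigcap_i B_i$ because $d_H(\tilde f(0), c_i) \le d_{\Z^d}(0, p_i) = r_i$.

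\medskip

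\noindent\textbf{Backward direction, Zorn setup.} Given a $1$-Lipschitz $f:A\to H$ with $A\subseteq\Z^d$, Zorn's lemma applied to partial $1$-Lipschitz extensions of $f$ yields a maximal extension $f^*:A^*\to H$; we aim to show $A^*=\Z^d$. If not, pick $v\in\Z^d\setminus A^*$: extending $f^*$ to $v$ is equivalent to exhibiting a point in $\bigcap_{a\in A^*}B_H(f^*(a),d_{\Z^d}(v,a))$. Pairwise intersections in this family are automatic, since $1$-Lipschitz and the $\Z^d$-triangle inequality give $d_H(f^*(a),f^*(a'))\le d(v,a)+d(v,a')$, and any two balls in a graph whose centers satisfy $d(c,c')\le r+r'$ meet at a suitable vertex on a geodesic between the centers.

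\medskip

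\noindent\textbf{Reduction to $2d$ balls and main obstacle.} The crux is collapsing the (potentially infinite) intersection to at most $2d$ balls so that $(2d,2)$-Helly applies. The key mechanism: if a neighbor $w = v + \sigma e_i$ of $v$ already lies in $A^*$, then for every $a\in A^*$ with $\sigma(a_i-v_i)>0$ one has $d(w,a)=d(v,a)-1$ (since $w$ sits on a $v$-to-$a$ geodesic), so $B_H(f^*(w),1)\subseteq B_H(f^*(a),d(v,a))$ by the triangle inequality and $1$-Lipschitz of $f^*$. The plan is to refine BFS-layering from $A$ into a well-ordering of $\Z^d$ under which, when one processes $v$, the neighbor $v+\sigma e_i$ is already in $A^*$ for every direction $\sigma e_i$ in which $A^*$ contains some $a$ with $\sigma(a_i-v_i)>0$. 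Then the full intersection collapses to at most $2d$ radius-$1$ balls around processed neighbors of $v$; these pairwise intersect because distinct neighbors of $v$ are at $\Z^d$-distance exactly $2$, and $(2d,2)$-Helly delivers the common point needed to extend $f^*$, contradicting maximality. The main obstacle is constructing the well-ordering cleanly: naive BFS can leave deadlocks among same-layer vertices (two nearby vertices each requiring the other already processed), so the refinement must simultaneously manage all directional dependencies — either by interleaving within-layer processing according to the directional activity pattern, or, when an ideal ordering is impossible, by replacing a missing neighbor ball with a radius-$\ge 1$ ball at a suitably chosen $A^*$-vertex on a $v$-to-direction geodesic. Executing this replacement consistently across all $v$ is the principal technical content of the backward direction.
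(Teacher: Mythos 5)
Your forward direction is correct and is essentially the paper's argument: the points $r_iu_i$ on the coordinate axes form an isometric copy of a star-shaped tree inside $\Z^d$, the pairwise intersection of the balls makes $p_i\mapsto c_i$ a $1$-Lipschitz map, and evaluating the extension at the origin produces the common point.

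The backward direction has a genuine gap, and you have flagged it yourself: the step you call ``the principal technical content'' is precisely the whole difficulty of the theorem, and it is left unexecuted. Moreover, the plan as stated does not survive inspection. No well-ordering of $\Z^d$ can guarantee that, when $v$ is processed, every ``active'' signed direction $\sigma e_i$ already has the neighbor $v+\sigma e_i$ in the domain (the first vertex adjoined to $A$ has no processed neighbors at all), and the proposed fallback fails for a structural reason: if two points $a,a'$ of $A^*$ both activate the direction $\sigma e_i$ but lie in different quadrants relative to $v$, there is in general no single vertex of $A^*$ lying on geodesics from $v$ to both of them, so replacing the missing neighbor by ``a suitably chosen $A^*$-vertex'' would require one substitute per point of $A^*$ rather than one per direction, destroying the bound of $2d$ balls. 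The paper resolves exactly this difficulty by a different mechanism: an induction on $|A|$ for finite $A$, in which one first passes to the set of points of $A$ not shaded by other points along geodesics to $b$ (the ``geodesic extension''), and then, whenever two points of $A$ have $k$-th coordinates that are nonzero and of the same sign, one uses the inductive hypothesis to extend $f$ to the axis-ward projection of the nearer point and replaces that point by its projection. Each such step strictly decreases the total number of nonzero coordinates, so the process terminates with $A$ either of size at most $2d$ or contained in the coordinate axes, where the culled set has at most $2d$ elements and the $(2d,2)$-Helly property finishes the proof. Some reduction of this kind (collapsing the full intersection to at most $2d$ balls in a way that is actually constructible) is indispensable, and without it your argument establishes only the forward implication.
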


Let $K_n$ denote the \emph{complete graph} on $n$-vertices. Clearly, $K_n$ is $G$-Kirszbraun
for all graphs $G$, since all maps $f: G\longrightarrow K_n$ are $1$-Lipschitz. On
the other hand, $\Z^2$ is not $\Z^2$-Kirszbraun, see Figure~\ref{f:Z2}.  This example
can be modified to satisfy a certain extendability property, see $\S$\ref{ss:ext-bip}.

\begin{figure}[hbt]
 \begin{center}
   \includegraphics[height=4.2cm]{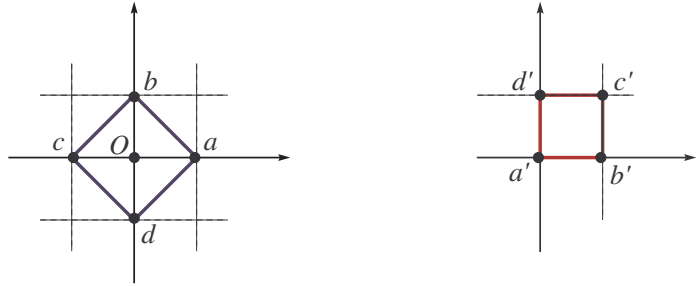}
   \caption{Here $A=\{a,b,c,d\}\subset \Z^2$.  Define $f: a\to a'$,
   $b\to b'$, $c\to c'$, $d\to d'$.  Then $f: A\to \Z^2$ is $1$-Lipschitz
   but not extendable to $\{O,a,b,c,d\}$.}
   \label{f:Z2}
 \end{center}
\end{figure}

\smallskip

\subsection{Structure of the paper}
We begin with a short non-technical Section~\ref{section: motivation}
describing some background results and ideas.  In essence, it is a
remark which is too long to be in the introduction.  It reflects the
authors' different points of view on the subject, which includes
ergodic theory, geometric combinatorics and discrete probability.
While in principle this section it can be skipped, we do
recommend reading it as it motivates other parts of the paper.

We then proceed to prove Theorem~\ref{thm:Zd Kirszbraun} in
Section~\ref{s:proof-main}. In Section~\ref{section: extensions of main},
we present several extensions and applications of the main theorem.  These
section is a mixed bag: we include a continuous analogue of the main
theorem (Theorem~\ref{thm: rd version}), the extension to larger
integral Lipschitz constants (Theorem~\ref{thm: about t lipschitz maps}),
and the bipartite extension useful for domino tilings
(Theorem~\ref{thm: bipartite version of main theorem}).
In a short Section~\ref{section:recognisability and an application},
we discuss computational aspects of the $\Z^d$-Kirszbraun property,
motivated entirely by applications to tilings.  We conclude with final
remarks and open problems in Section~\ref{s:finrem}.

\bigskip

\section{Motivation and background}\label{section: motivation}
A \emph{graph homomorphism} from a graph $G$ to a graph $H$ is an adjacency
preserving map between the respective vertices. Let $\Hom(G, H)$ denote the set of all
graph homomorphisms from $G$ to~$H$. We refer to~\cite{HN} for background on
graph homomorphisms and connections to coloring and complexity problems.

Our motivation comes from two very distinct sources:
\begin{enumerate}
\item
Finding `fast' algorithms to determine whether a given graph homomorphism on the boundary of a box in $\Z^d$ to $H$ extends to the entire box.\label{motivation: 1}
\item
Finding a natural parametrization of the so-called ergodic Gibbs measures on space of graph homomorphisms $\Hom(\Z^d, H)$ (see \cite{MR2251117,TassyMenz2016}).
\label{motivation: 2}
\end{enumerate}

For~$(1)$, roughly, suppose we are given a certain simple set of tiles~$\rT$,
such as dominoes or more generally bars \ts $\{k\times 1, 1 \times \ell\}$.  It turns out, that
$\rT$-tileability of a simply-connected region $\Gamma$ corresponds to existence of a graph
homomorphism with given boundary conditions on $\partial\Gamma$. We refer to~\cite{Pak-horizons,Thu}
for the background, and to~\cite{MR3530972,Tas} for further details. Our
Theorem~\ref{prop: hole filling} is motivated by these problems.

For both these problems, the $\Z^d$-Kirszbraun property of the graph $H$ (or a related graph)
is critical and motivates this line of research; the space of
$1$-Lipschitz maps is the same as the space of graph homomorphisms if and only if $H$ is
\emph{reflexive}, that is, every vertex has a self-loop. The study of
Kirszbraun-type theorems among metric spaces and its relationship to Helly-like properties
is an old one and goes back to the original paper by Kirszbraun~\cite{kirszbraun1934}.
A short and readable proof is given in \cite[p.~201]{federer1969}.
This was later rediscovered in \cite{MR0011702} where it was generalized to the cases
where the domain and the range are spheres in the Euclidean space or Hilbert spaces.
The effort of understanding which metric spaces
satisfy Kirszbraun properties culminated in the theorem by
Lang and Schroeder~\cite{langschroder1997} that identified
the right curvature assumptions on the
underlying spaces for which the theorem holds.

In the metric graph theory, the research has focused largely on a certain universality property.
Formally, a graph is called \emph{Helly} if it is $(\infty, 2)$-Helly. An easy deduction,
for instance following the discussion in \cite[Page 153]{MR0157289}, shows that $H$ is
Helly if and only if for all graphs $G$, $H$ is $G$-Kirszbraun. Some nice
characterizations of Helly graphs can be found in the survey~\cite[$\S$3]{MR2405677}.
However, we are not aware of any other study of $G$-Kirszbraun graphs for fixed~$G$.

\bigskip

\section{Proof of the main theorem}\label{s:proof-main}

\subsection{Geodesic extensions}
Let $d_H$ denote the path metric on the graph $H$.
A \emph{walk}~$\ga$ in the graph $H$ of \emph{length}~$k$,
is a sequence of $k+1$ vertices $(v_0, v_1, \ldots,v_k)$, s.t.
$d_H(v_{i}, v_{i+1})\leq 1$, for all $0\leq i \leq k-1$.
We say that $\ga$ starts at $v_0$ and ends at $v_k$.
A \emph{geodesic} from vertex $v$ to $w$ in a graph~$G$
is a walk~$\ga$ from $v$ to $w$ of the shortest length.

Consider a graph $G$, a subset $A\subset G$ and $b\in G\setminus A$.  Define
the \emph{geodesic extension of $A$ with respect to $b$} \ts as the following set:
$$\aligned
\Cull(A,b)\, := & \, \, \{a\in A~:~\text{there does not exist \ts $a'\in A\setminus \{a\}$ \ts s.t.\ there is a }\\
& \hskip1.86cm \text{geodesic~$\ga$ from $a$ to $b$ which passes through $a'$}\}.
\endaligned
$$
For example, let $A\subset \Z^2\setminus \{(0,0)\}$. If $(i,j), (k,l)\in \Cull(A,\m 0)$
are elements of the same quadrant, then $|i|> |k|$ if and only if $|j|<|l|$.

\begin{remark}\label{remark: zd culling coordinates}{\rm
If $A\subset \Z^d$ be contained in the coordinate axes then $|\Cull(A,\m 0)|\leq 2d$. }
\end{remark}

The notion of geodesic extension allows us to prove that certain $1$-Lipschitz
maps can be extended:

\begin{prop}\label{prop:culling of extras}
Let $A\subset G$, map $f: A\longrightarrow H$ be $1$-Lipschitz, and let $b\in G \setminus A$.
The map $f$ has a $1$-Lipschitz extension to \ts $A\cup \{b\}$ \ts if and only if \ts
$f|_{\Cull(A,b)}$ \ts has a $1$-Lipschitz extension to \ts $\Cull(A,b)\cup\{b\}$.
\end{prop}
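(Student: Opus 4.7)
The forward direction is immediate: any $1$-Lipschitz extension of $f$ to $A\cup\{b\}$ restricts to a $1$-Lipschitz extension of $f|_{\Cull(A,b)}$ to $\Cull(A,b)\cup\{b\}$. The content is in the backward direction, and I plan to handle it by induction on the distance $d_G(a,b)$, with $\Cull(A,b)$ serving as the base case.

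Concretely, suppose we are given a value $v\in H$ with $d_H(f(a),v)\leq d_G(a,b)$ for every $a\in \Cull(A,b)$; I would set $\tilde f(b):=v$ and keep $\tilde f=f$ on $A$, and show that $d_H(f(a),v)\leq d_G(a,b)$ for all $a\in A$. Since $f$ is already $1$-Lipschitz on $A$, this is all that remains to check. I would induct on $N(a):=d_G(a,b)$. If $a\in \Cull(A,b)$, the bound holds by hypothesis. Otherwise, by the definition of $\Cull(A,b)$, there exists $a'\in A\setminus\{a\}$ lying on some geodesic from $a$ to $b$, so that
$$
d_G(a,a')+d_G(a',b)\,=\,d_G(a,b),\qquad d_G(a,a')\geq 1.
$$
In particular $N(a')<N(a)$, so the inductive hypothesis gives $d_H(f(a'),v)\leq d_G(a',b)$; combined with the $1$-Lipschitz property $d_H(f(a),f(a'))\leq d_G(a,a')$ and the triangle inequality in $H$, this yields
$$
d_H(f(a),v)\,\leq\,d_H(f(a),f(a'))+d_H(f(a'),v)\,\leq\,d_G(a,a')+d_G(a',b)\,=\,d_G(a,b),
$$
completing the induction.

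The only delicate point is that the induction is well-founded, i.e.\ that the base case is actually populated whenever $A\neq\emptyset$. This is an easy observation: any $a\in A$ that minimizes $d_G(a,b)$ must lie in $\Cull(A,b)$, since the existence of an intermediate $a'\in A$ on a geodesic to $b$ would force $d_G(a',b)<d_G(a,b)$, contradicting minimality. With that in hand, the induction terminates and the proposition follows. I do not expect any real obstacles; the main thing to be careful about is just keeping straight the distinction between ``$a'$ is a different element of $A$ on a geodesic from $a$ to $b$'' and ``$a'$ is an intermediate vertex on such a geodesic,'' and to verify that the former (which is what $\Cull$ forbids) is exactly what the inductive step uses.
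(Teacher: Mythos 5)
Your proof is correct and follows essentially the same route as the paper: assign $b$ the value provided by the culled extension and verify the Lipschitz bound for $a\in A\setminus\Cull(A,b)$ via an intermediate point of $A$ on a geodesic to $b$ together with the triangle inequality. Your induction on $d_G(a,b)$ is in fact slightly more careful than the paper's version, which asserts without comment that the intermediate point can be taken in $\Cull(A,b)$ itself; your formulation only needs an intermediate point in $A$ and lets the well-founded induction do the rest.
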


\begin{proof}
The forward direction of the proof is immediate because $\Cull(A,b)\subset A$. For the backwards direction let $\t f: \Cull(A,b)\cup\{b\}\subset G\longrightarrow
H$ be a $1$-Lipschitz extension of $f|_{\Cull(A,b)}$ and consider the map
$\hat f: A\cup \{b\}\subset G\longrightarrow H$
given by
$$\hat f(a):=\begin{cases}
f(a)&\text{ if }a\in A\\
\t f(b)&\text{ if }a=b.
\end{cases}•$$
To prove that $\hat f$ is $1$-Lipschitz we need to verify that for all $a\in A$, $d_ H(\hat f(a), \hat f(b))\leq d_ G(a,b)$.
From the hypothesis it follows for $a\in \Cull(A,b)$. Now suppose $a\in A\setminus \Cull(A,b)$. Then there exists $a'\in \Cull(A,b)$ such that there exists a
geodesic from $a$ to $b$ passing through $a'$. This implies that $d_{ G}(a,b)=d_{ G}(a,a')+d_ G(a',b)$. But
\begin{eqnarray*}
d_ G(a,a')\geq d_{ H}(\hat f(a), \hat f (a'))=d_{ H}(f(a), f (a'))\text{ because }f\text{ is $1$-Lipschitz}\\
d_ G(a',b)\geq d_{ H}(\hat f(a'), \hat f (b))=d_{ H}(\t f(a'),\t f (b))\text{ because }\t f\text{ is $1$-Lipschitz}.
\end{eqnarray*}
By the triangle inequality, the proof is complete.
\end{proof}

\smallskip

\subsection{Helly's property}
Given a graph $H$, a vertex $v\in H$ and $n\in \N$ denote by $B^H_n(v)$,
the ball of radius $n$ in $H$ centered at $v$. We will now interpret
the $(n,2)$-Helly's property in a different light.

\begin{prop}\label{prop: small culled vertices}
Let $H$ be a graph satisfying the $(n,2)$-Helly's property.
For all 1-Lipschitz maps $f: A\subset G\longrightarrow
H$ and $b\in G\setminus A$ such that $|\Cull(A, b)|\leq n$,
there exists a $1$-Lipschitz extension of $f$ to $A\cup \{b\}$.
\end{prop}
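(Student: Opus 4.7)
The plan is to reduce the problem to a ball-intersection problem via Proposition~\ref{prop:culling of extras}, and then apply the $(n,2)$-Helly hypothesis directly.

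First, by Proposition~\ref{prop:culling of extras} it suffices to extend $f|_{\Cull(A,b)}$ to $\Cull(A,b) \cup \{b\}$. A $1$-Lipschitz extension amounts to choosing a vertex $w \in H$ such that $d_H(f(a), w) \le d_G(a,b)$ for every $a \in \Cull(A,b)$; equivalently, finding a vertex in the intersection
\[
I \ := \ \bigcap_{a \in \Cull(A,b)} B^H_{d_G(a,b)}\bigl(f(a)\bigr).
\]
So the goal becomes showing $I \neq \emp$.

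Next I would check the hypotheses of the $(n,2)$-Helly property for this collection. The collection has at most $n$ balls by assumption. Each radius $d_G(a,b)$ is at least $1$, since $b \notin A$ implies $a \neq b$ in the connected graph $G$. It remains to verify pairwise intersection: for $a, a' \in \Cull(A,b)$, the $1$-Lipschitz property of $f$ and the triangle inequality in $G$ give
\[
d_H(f(a), f(a')) \ \le \ d_G(a, a') \ \le \ d_G(a,b) + d_G(a',b),
\]
which (choosing an appropriate vertex on a geodesic from $f(a)$ to $f(a')$ in $H$) shows that $B^H_{d_G(a,b)}(f(a)) \cap B^H_{d_G(a',b)}(f(a')) \neq \emp$.

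Having verified pairwise intersection, the $(n,2)$-Helly hypothesis applied to this family yields $I \neq \emp$. Picking any $w \in I$ and setting $\t f(b) := w$, $\t f|_{\Cull(A,b)} := f|_{\Cull(A,b)}$ produces a $1$-Lipschitz extension on $\Cull(A,b) \cup \{b\}$, and Proposition~\ref{prop:culling of extras} then promotes this to a $1$-Lipschitz extension on $A \cup \{b\}$. There is no serious obstacle here; the only minor point to be careful about is that the radii are genuinely $\ge 1$ (so that the Helly hypothesis applies) and that pairwise ball intersection in a graph follows from the corresponding distance inequality (via a geodesic, since the intermediate radii are integers).
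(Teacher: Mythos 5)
Your proposal is correct and follows essentially the same route as the paper: reduce to extending $f|_{\Cull(A,b)}$ via Proposition~\ref{prop:culling of extras}, realize the choice of $\t f(b)$ as a point in $\bigcap_{a\in \Cull(A,b)} B^H_{d_G(a,b)}(f(a))$, verify pairwise intersection from the $1$-Lipschitz condition and the triangle inequality, and invoke the $(n,2)$-Helly hypothesis. Your added remarks (that the radii are $\ge 1$ and that pairwise intersection follows by picking a suitable vertex on a geodesic) are correct details the paper leaves implicit.
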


\begin{proof}	Consider the extension $\t f$ of $f$ to the set $A\cup \{b\}$,
 where $\t f(b)$ is any vertex in
$$
\bigcap _{b'\in \Cull(A, b)}B^H_{d_G(b,b')}(f(b'));
$$
the intersection is nonempty because $|\Cull(A, b)|\leq n$ and for all $a, a'\in \Cull(A, b)$, we have:
$$d_H\bigl(f(a), f(a')\bigr) \, \leq \, d_G(a, a')\, \leq \, d_G(a, b)\. + \. d_{G}(b, a')
$$
which implies
$$B^H_{d_G(a,b)}(f(a))\cap B^H_{d_G(b,a')}\bigl(f(a')\bigr)\, \neq \, \emp.
$$
The function \ts $\t f|_{\Cull(A,b)\cup\{b\}}$ is $1$-Lipschitz, so
Proposition~\ref{prop:culling of extras} completes the proof.
\end{proof}

\subsection{Examples}
Let $C_n$ and $P_n$ denote the \emph{cycle graph} and the \emph{path graph} with $n$ vertices, respectively.

\begin{corollary}
All connected graphs are \ts $P_n$--, \ts $C_n$-- and \ts $\Z$--Kirszbraun.
\end{corollary}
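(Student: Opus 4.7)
The plan is to realize that $G \in \{P_n, C_n, \Z\}$ is about as ``one-dimensional'' as a graph can be, so the culled set $\Cull(A, b)$ is always tiny; combined with the trivial Helly property at $n = 2$, Proposition~\ref{prop: small culled vertices} will then give the extension one vertex at a time.

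First I would observe that the $(2,2)$-Helly property holds for every graph $H$: the hypothesis ``every $2$-subcollection of $\{B_1, B_2\}$ has nonempty intersection'' coincides with the conclusion $B_1 \cap B_2 \neq \emp$, so there is nothing to verify. Thus Proposition~\ref{prop: small culled vertices} with $n=2$ reduces the problem to showing that $|\Cull(A, b)| \leq 2$ for every finite/infinite subset $A$ of $G$ and every $b \in G \setminus A$.

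Next I would verify the cardinality bound for each of the three graphs. For $G = \Z$ (or $G = P_n$), any $b$ splits $G \setminus \{b\}$ into two rays; the unique geodesic from $a \in A$ to $b$ passes through the element of $A$ nearest to $b$ on $a$'s side of $b$, unless $a$ is itself that nearest element. Hence $\Cull(A, b)$ is contained in the set of at most two such ``nearest neighbours'' of $b$ in $A$. For $G = C_n$, let $a_1$ (resp. $a_2$) be the closest element of $A$ to $b$ going clockwise (resp. counter-clockwise). For any $a \in A \setminus \{a_1, a_2\}$, compare the clockwise and counter-clockwise distances $d_{cw}(a, b)$ and $d_{ccw}(a, b)$: in both the strict and the equality case, one of the two geodesic directions from $a$ to $b$ is a valid geodesic that must pass through $a_1$ or $a_2$ (this uses $d_{cw}(a_1, b) < d_{cw}(a, b)$ and the analogous inequality for $a_2$). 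Hence again $\Cull(A, b) \subseteq \{a_1, a_2\}$.

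Finally, given any $1$-Lipschitz map $f : A \longrightarrow H$ with $A \subsetneq G$, I would pick any $b \in G \setminus A$, apply Proposition~\ref{prop: small culled vertices} to extend $f$ to $A \cup \{b\}$, and iterate. For $G = P_n$ or $C_n$ this terminates in finitely many steps; for $G = \Z$ I would enumerate $\Z \setminus A$ as a sequence and use the fact that the $1$-Lipschitz condition is pairwise and hence preserved by this at-most-countable successive extension (equivalently, a routine Zorn's lemma argument on partial $1$-Lipschitz extensions ordered by inclusion, with the local obstruction removed by Proposition~\ref{prop: small culled vertices}). The only mildly subtle point is the cycle case with $n$ even, where antipodes admit two geodesics; but as indicated above, this actually helps rather than hurts, since either geodesic suffices to cull a non-extremal $a$.
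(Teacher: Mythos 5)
Your proposal is correct and follows essentially the same route as the paper: note that every graph is trivially $(2,2)$-Helly, check that $|\Cull(A,b)|\leq 2$ for $A\subset G$ and $b\in G\setminus A$ when $G\in\{P_n,C_n,\Z\}$, and apply Proposition~\ref{prop: small culled vertices}. The extra care you take with the even cycle and with iterating the one-point extension over an infinite domain is sound but not something the paper spells out.
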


In the case when $G=P_n, C_n$ or $\Z$ we have for all $A\subset G$ and
$b\in G\setminus A$, $|\Cull(A,b)|\leq 2$; the corollary follows from
Proposition~\ref{prop: small culled vertices} and the fact that
all graphs are $(2,2)$-Helly.

Let $\br = (r_1, r_2, \ldots r_n)\in \N^n$.  Denote by $T_{\br}$ the \emph{star-shaped tree}
with a central vertex $b_0$ and $n$ disjoint walks of lengths
$r_1,\ldots,r_n$ emanating from it and ending in vertices $b_1, b_2, \ldots, b_n$.
\begin{corollary}\label{cor: helly and test tree}
Graph $H$ has the $(n,2)$-Helly's property if and only if $H$ is $T_{\br}$-Kirszbraun, for all $\br\in \N^n$.
\end{corollary}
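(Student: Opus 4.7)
The plan is to prove both directions of the biconditional by exploiting the tree structure of $T_{\br}$, whose unique central vertex $b_0$ joins the $n$ arms ending at $b_1,\dots,b_n$.

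For the forward direction, assume $H$ is $(n,2)$-Helly and take a $1$-Lipschitz map $f: A \to H$ with $A \subset T_{\br}$. First I would extend $f$ to $b_0$ (if $b_0 \notin A$). The key observation is that a geodesic in $T_{\br}$ from any vertex to $b_0$ stays inside a single arm, so $\Cull(A, b_0)$ contains at most one point per arm, giving $|\Cull(A,b_0)|\leq n$; Proposition~\ref{prop: small culled vertices} then produces the extension. Next, each arm is isometric to the path $P_{r_i+1}$ on which $f$ is already defined on a $1$-Lipschitz subset containing $b_0$. Since all connected graphs are $P_n$-Kirszbraun (by the corollary above), we can extend along each arm independently. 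To verify that the resulting $\hat f$ is $1$-Lipschitz on all of $T_{\br}$, observe that for $v,w$ in distinct arms the unique $T_{\br}$-geodesic passes through $b_0$, and the triangle inequality gives
$$d_H\bigl(\hat f(v),\hat f(w)\bigr)\leq d_H\bigl(\hat f(v),\hat f(b_0)\bigr)+d_H\bigl(\hat f(b_0),\hat f(w)\bigr)\leq d_{T_{\br}}(v,b_0)+d_{T_{\br}}(b_0,w)=d_{T_{\br}}(v,w).$$

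For the backward direction, assume $H$ is $T_{\br}$-Kirszbraun for every $\br\in\N^n$, and consider closed balls $B^H_{r_1}(v_1),\dots,B^H_{r_n}(v_n)$ of radii $\geq 1$ with pairwise nonempty intersections. I would set $\br=(r_1,\dots,r_n)$ and define $f(b_i):=v_i$ on the leaves of $T_{\br}$. Pairwise intersection combined with the triangle inequality gives $d_H(v_i,v_j)\leq r_i+r_j=d_{T_{\br}}(b_i,b_j)$, so $f$ is $1$-Lipschitz on $\{b_1,\dots,b_n\}$. A $T_{\br}$-Kirszbraun extension $\hat f$ then satisfies $d_H(\hat f(b_0),v_i)\leq d_{T_{\br}}(b_0,b_i)=r_i$ for each~$i$, placing $\hat f(b_0)$ in the common intersection.

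The main subtlety lies in the forward direction: one must recognize that $T_{\br}$ decomposes cleanly into ``extend to $b_0$'' followed by ``extend along each arm independently'', and verify that splicing these arm-extensions yields a globally $1$-Lipschitz map. The culling count $|\Cull(A,b_0)|\leq n$ is precisely the input $(n,2)$-Helly can consume through Proposition~\ref{prop: small culled vertices}, while the fact that every cross-arm geodesic factors through $b_0$ is what makes the final splice work; everything else is routine.
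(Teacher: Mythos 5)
Your proof is correct. The backward direction is word-for-word the paper's argument: place the ball centers at the leaves, check the Lipschitz condition via pairwise intersection, and read off the common point as the image of $b_0$. The forward direction, however, takes a slightly different route. The paper simply observes that $|\Cull(A,b)|\leq n$ for \emph{every} $b\in T_{\br}\setminus A$ (not just $b=b_0$) and then invokes Proposition~\ref{prop: small culled vertices} vertex by vertex to extend $f$ to all of $T_{\br}$ in one uniform induction. You instead only verify the culling bound at the center $b_0$ --- where it is most transparent, since the points of each arm are linearly ordered by distance to $b_0$ --- and then delegate the remaining work to the already-established fact that every connected graph is $P_m$-Kirszbraun, finishing with an explicit check that splicing the arm-extensions at $b_0$ stays $1$-Lipschitz because every cross-arm geodesic factors through $b_0$. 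What your decomposition buys is that you avoid the mildly fiddly verification that $|\Cull(A,b)|\leq n$ when $b$ sits partway along an arm (where one must note that any vertex of $A$ between $b$ and $b_0$ culls everything on the other arms); what the paper's version buys is brevity and a single mechanism. Both arguments are complete and rest on the same key lemma, Proposition~\ref{prop: small culled vertices}.
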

\begin{proof}
For all $\br\in \N^n$, $A\subset T_{\br}$ and $b\in T_{\br}\setminus \{A\}$, we have \ts $|\Cull(A,b)|\leq n$. Thus by Proposition \ref{prop: small culled vertices} if $H$ has the $(n,2)$-Helly's property then $H$ is $T_{\br}$-Kirszbraun. For the other direction, let $H$ be $T_{\br}$-Kirszbraun for all $\br\in \N^n$. Suppose that $$B^H_{r_1}(v_1), B^H_{r_2}(v_2), \ldots, B^H_{r_n}(v_n)$$ are balls in $H$ such that $B^H_{r_i}(v_i)\cap B^H_{r_j}(v_j)\neq \emptyset$ for all $1\leq i, j \leq n$. Then $f: \{b_1, b_2, \ldots, b_n\}\subset T_{\br}\to H$ given by $f(b_i):= v_i$ is $1$-Lipschitz with a $1$-Lipschitz extension $\t f: T_{\br}\to H$. It follows that $\t f (b_0)\in \bigcap_{i=1}^n B^H_{r_i}(v_i)$ proving that $H$ has the $(n,2)$-Helly's property.
\end{proof}
\smallskip

\subsection{Proof of Theorem~\ref{thm:Zd Kirszbraun}}
We will first prove the ``only if'' direction. Let $H$ be a graph which is
$\Z^d$-Kirszbraun. For all $\br\in \N^{2d}$ there is an isometry from $T_{\br}$ to
$\Z^d$ mapping the walks emanating from the central vertex to the coordinate axes.
Hence $H$ is $T_{\br}$-Kirszbraun for all $\br\in \N^{2d}$. By Corollary
~\ref{cor: helly and test tree}, we have proved the $(2d,2)$-Helly's property for~$H$.

In the ``if'' direction, suppose $H$ has the $(2d,2)$-Helly's property.
We need to prove that for all $A\subset \Z^d$, every
$1$-Lipschitz maps $f:A\to H$ has a $1$-Lipschitz extension.
It is sufficient to prove this for finite subsets~$A$.
We proceed by induction on $|A|$.  Namely, we prove the following property $\St(n)$:

\smallskip

\noindent
\qquad  Let $f:A\subset \Z^d\longrightarrow H$ be $1$-Lipschitz with $|A|=n$. Let $b\in \Z^d\setminus A$. Then the function $f$ has

\noindent
\qquad  a $1$-Lipschitz extension to $A\cup \{b\}$.

\smallskip

\noindent
We know $\St(n)$ for $n\leq 2d$ by the $(2d,2)$-Helly's property.
Let us assume $\St(n)$ for some $n\geq 2d$; we want to prove $\St(n+1)$.
Let \ts $f:A\longrightarrow H$, $A\subset \Z^d$, be $1$-Lipschitz with $|A|=n+1$
and \ts $b\in \Z^d\setminus A$. Without loss of generality assume that $b=\m 0$.
Also assume that \ts $\Cull(A, \m 0)=A$; otherwise we can use the induction
hypothesis and Proposition~\ref{prop:culling of extras} to obtain
the required extension to \ts $A\cup \{\m 0\}$.

We will prove that there exists a set \ts $\t A\subset \Z^d$ and a
$1$-Lipschitz function \ts $\t f: \t A\longrightarrow H$, such that
\begin{enumerate}
\item
If $\t f$ has an extension to $\t A\cup\{\m 0\}$ then $f$ has an extension
to \ts $A\cup \{\m 0\}$.
\item
Either the set $\t A$ is contained in the coordinate axes of $\Z^d$ or \ts $|\t A|\leq 2d$.
\end{enumerate}
By Remark~\ref{remark: zd culling coordinates}, if $A$ is contained in the coordinate axis then $|\Cull(A, \m 0)|\leq 2d$. Since $H$ has the $(2d,2)$-Helly's property, by Proposition \ref{prop: small culled vertices} it follows that $\t f$ has an extension to $\t A\cup\{\m 0\}$ which completes the proof.

Since $|A|\geq n+1>2d$, there exists $\mi,\mj\in A$ and a coordinate $1\leq k \leq d$ such that $i_k, j_k$ are non-zero and have the same sign. Suppose $i_k\leq
j_k$. Then there is a geodesic from $\mj$ to $\mi - i_k \m e_k$ which passes through $\mi$. Since $A=\Cull(A, \m 0)$ we have that $$\mi - i_k \m e_k\notin \{\m
0\}\cup A.$$
Thus $\mj \notin \Cull(A, \mi-i_k\m e_k)$ and hence $|\Cull(A, \mi-i_k\m e_k)|\leq n$. By $\St(n)$ there exists a $1$-Lipschitz extension of $f|_{\Cull(A, \mi-i_k\m
e_k)}$ to $\Cull(A, \mi-i_k\m e_k)\cup\{\mi-i_k\m e_k\}$. By Proposition~\ref{prop:culling of extras}
there is a $1$-Lipschitz extension of $f$ to $ f': A\cup
\{\mi-i_k\m e_k\}\longrightarrow H$. But there is a geodesic from $\mi$ to $\m 0$ which passes through $\mi -i_k \m e_k$. Thus
$$\Cull\bigl(A\cup\{\mi-i_k \m e_k\}, \m 0\bigr)\subset \bigl(A\setminus\{\mi\}\bigr)\cup\{\mi -i_k \m e_k\}.$$
Set \ts $A':=\bigl(A\setminus\{\mi\}\bigr)\cup\{\mi -i_k \m e_k\}$. By Proposition~\ref{prop:culling of extras},
map~$f'$ has a $1$-Lipschitz extension to $A\cup \{\mi-i_k\m e_k\}
\cup \{\m 0\}$ if and only if $f'|_{A'}$ has a $1$-Lipschitz extension to $A'\cup \{\m 0\}$.

Thus we have obtained a set $A'$ and a $1$-Lipschitz map $f':A'\subset \Z^d\longrightarrow H$ for which
\begin{enumerate}
\item
If $f'$ has an extension to $A'\cup\{\m 0\}$ then $f$ has an extension to $A\cup \{\m 0\}$.
\item
The sum of the number of non-zero coordinates of elements of $A'$ is less than the sum of the number of non-zero coordinates of elements of $A$.
\end{enumerate}
By repeating this procedure (formally this is another induction) we get the required set $\t A\subset \Z^d$ and $1$-Lipschitz map $\t f: \t A\longrightarrow H$.
This completes the proof.

\bigskip

\section{Applications of the main theorem}\label{section: extensions of main}

\subsection{Back to continuous setting} \label{ss:ext-cont}
The techniques involved in the proof of Theorem~\ref{thm:Zd Kirszbraun}
extend to the continuous case with only minor modifications. The following
result might be of interest in metric geometry.

\smallskip

A metric space $(X,\mt)$ is \emph{geodesically complete} if for all
$x,y\in X$ there exists a continuous function $f: [0,1]\to X$, such that
$$
\mt(x, f(t))\. = \. t \ts \mt(x,y) \quad \text{ and }\quad \mt(f(t), y) \. =\. (1-t)\ts \mt(x,y).
$$

\begin{thm}\label{thm: rd version}
Let $Y$ be a metric space such that every closed ball in $Y$ is compact.
Then $Y$ is $(\R^d, \ell_1)$-Kirszbraun if and only if
$Y$ is geodesically complete and $(2d,2)$-Helly.
\end{thm}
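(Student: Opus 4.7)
My plan is to mirror the proof of Theorem~\ref{thm:Zd Kirszbraun}, substituting compactness of closed balls and geodesic completeness of $Y$ for the discrete-lattice features of $\Z^d$.

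For necessity, I would derive geodesic completeness by applying the Kirszbraun hypothesis to the two-point domain $\{\vec 0, r \m e_1\} \subset (\R^d, \ell_1)$ mapped to $\{x, y\}$ with $r := d_Y(x, y)$; restricting the $1$-Lipschitz extension to the segment $\{t \m e_1 : 0 \le t \le r\}$ yields a $1$-Lipschitz path of length at most $r$ between points at distance $r$, forcing it to be a geodesic parametrization. For $(2d, 2)$-Helly, I would mimic the proof of Corollary~\ref{cor: helly and test tree}: isometrically embed the star tree $T_{(r_1, \ldots, r_{2d})}$ into $(\R^d, \ell_1)$ by placing its $2d$ arms along the signed coordinate half-axes from the origin, and apply Kirszbraun to the $1$-Lipschitz assignment $b_i \mapsto v_i$ (which is $1$-Lipschitz since $\|b_i - b_j\|_1 = r_i + r_j \ge d_Y(v_i, v_j)$ by pairwise intersection). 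The value of the extension at $\vec 0$ then lies in $\bigcap_i B_Y(v_i, r_i)$.

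For sufficiency, I would apply Zorn's lemma to the poset of $1$-Lipschitz extensions of $f$: chains admit upper bounds because compactness of closed balls in $Y$ permits continuous extension to the closure of the union of domains. Let $F : D \to Y$ be such a maximal extension, with $D$ closed. Assuming for contradiction that $b \in \R^d \setminus D$, extending $F$ to $D \cup \{b\}$ will contradict maximality. This extension exists iff $\bigcap_{a \in D} B_Y(F(a), \|a - b\|_1)$ is nonempty, and compactness of balls together with the finite intersection property reduces the task to showing $\bigcap_{a \in S} B_Y(F(a), \|a - b\|_1) \neq \emp$ for every finite $S \subset D$. For such finite $S$, I would run the proof of Theorem~\ref{thm:Zd Kirszbraun} in the continuous setting: define the culling $\Cull(S, b)$ using $\ell_1$-geodesics (monotone coordinate paths), verify the analog of Proposition~\ref{prop:culling of extras} by the same argument, and iterate the shifting reduction, replacing $\mi \in S$ by $\mi - i_k \m e_k$ whenever two points of $S$ share a sign in the $k$-th coordinate, to reach the axial base case $|\Cull(S, b)| \le 2d$, where $(2d, 2)$-Helly supplies the common intersection point.

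The main obstacle will be the mismatch between the $(2d, 2)$-Helly hypothesis, restricted to balls of radius $\ge 1$, and the fact that the radii $\|a - b\|_1$ appearing in the finite reduction may be arbitrarily small. Bridging this gap is the principal technical difference from the discrete case, and it is precisely here that geodesic completeness must be exploited alongside compactness of balls: one upgrades $(2d, 2)$-Helly to hold for all positive radii, for instance by slightly inflating balls to radii $\ge 1$, applying Helly, and then recovering a point in the original balls via a nested-compactness argument on the inflations, with geodesic completeness ensuring the limit lies in the correct balls. Making this radii-upgrade rigorous is, in my view, the crux of the continuous adaptation.
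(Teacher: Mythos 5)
Your overall architecture is the intended one: the necessity direction via the two-point domain (for geodesic completeness) and isometrically embedded star trees along the signed half-axes (for the Helly property), and the sufficiency direction via a reduction to finite sets followed by the culling/shifting induction of Theorem~\ref{thm:Zd Kirszbraun}. Your Zorn's-lemma reduction is a cosmetic variant of the paper's Lemma~\ref{lemma: observation} (which uses separability and a countable dense set instead), and the continuous culling argument does go through because the $\ell_1$-metric on $\R^d$ has the same monotone-geodesic structure as $\Z^d$.

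However, the step you single out as the crux --- upgrading the $(2d,2)$-Helly property from balls of radius $\ge 1$ to balls of arbitrary positive radius using inflation, compactness and geodesic completeness --- is not merely unproven; it is false. Take $Y$ to be a circle of circumference $L<2$ with its intrinsic metric. It is compact and geodesically complete, and every closed ball of radius $\ge 1$ is all of $Y$ (the diameter is $L/2<1$), so $Y$ vacuously satisfies $(n,2)$-Helly for every $n$ in the sense of the paper's definition. Yet the four equally spaced points $v_1,\dots,v_4$ and the balls $B(v_i,L/4)$ pairwise intersect with empty total intersection, so the map sending the four leaves of the star with arm lengths $L/4$ (embedded along $\pm \m e_1,\pm\m e_2$ in $(\R^2,\ell_1)$) to $v_1,\dots,v_4$ is $1$-Lipschitz with no extension to the origin: $Y$ is not $(\R^2,\ell_1)$-Kirszbraun. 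Hence no argument can recover small-radius Helly from the radius-$\ge 1$ version, and your nested-compactness scheme has no parameter that reaches the original radii while keeping all radii $\ge 1$. The correct reading of the theorem is that in the continuous setting ``$(2d,2)$-Helly'' must mean the Helly property for closed balls of \emph{all} positive radii; this is exactly what your necessity argument proves (the star trees you embed have arbitrary positive arm lengths), and exactly what the sufficiency argument consumes, so once the definition is adjusted no upgrade is needed. Separately, note that geodesic completeness is not used for any radius upgrade: its role in the sufficiency direction is to guarantee that the balls $B_Y(f(a),\|a-b\|_1)$ actually intersect pairwise when $d_Y(f(a),f(a'))\le \|a-b\|_1+\|b-a'\|_1$ (the verification of the Helly hypothesis in the analogue of Proposition~\ref{prop: small culled vertices}), a point your write-up does not address.
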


First, we need the following result.

\begin{lemma}\label{lemma: observation}
Let $(X, \mt)$ be separable and the closed balls in $(Y, \mt')$ be compact.
Then $(Y, \mt')$ is $(X, \mt)$-Kirszbraun if and only if all finite sets
$A\subset X$ and $1$-Lipschitz maps $f:A\to Y$, $y\in Y\setminus A$
have a $1$-Lipschitz extension to $A\cup\{y\}$.
\end{lemma}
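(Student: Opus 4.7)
The forward direction is immediate: if $(Y,\mt')$ is $(X,\mt)$-Kirszbraun, then any $1$-Lipschitz map $f:A\to Y$ on a finite $A\subset X$ admits a $1$-Lipschitz extension to all of $X$, which in particular restricts to an extension on $A\cup\{y\}$. The substantive direction is the converse, and I would organize its proof into three stages.

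First, I would record that $Y$ is complete. Any Cauchy sequence in $Y$ is bounded, hence contained in some closed ball; since that ball is compact by hypothesis, the sequence has a convergent subsequence, and being Cauchy it converges. This will be needed in the last stage.

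Next, I would upgrade the hypothesis from finite $A$ to arbitrary $A$: namely, if $f:A\to Y$ is $1$-Lipschitz for an arbitrary $A\subset X$ and $y\in X\setminus A$, then $f$ extends to $A\cup\{y\}$. Assuming $A$ nonempty (the empty case is trivial), let $K_a := \bar B\bigl(f(a), \mt(y,a)\bigr)\subset Y$ for each $a\in A$. Any point in $\bigcap_{a\in A} K_a$ yields a valid extension value $\tilde f(y)$, so it suffices to show this intersection is nonempty. By the hypothesis, for every finite $F\subset A$ the map $f|_F$ extends to $F\cup\{y\}$, and the new value lies in $\bigcap_{a\in F} K_a$. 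Fixing any $a_0\in A$, the sets $\{K_a\cap K_{a_0}\}_{a\in A}$ are closed subsets of the compact set $K_{a_0}$ enjoying the finite intersection property, so their total intersection, which coincides with $\bigcap_{a\in A} K_a$, is nonempty.

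Finally, I would use separability to assemble the full extension. Let $D=\{d_1,d_2,\ldots\}$ be a countable dense subset of $X$. Starting from $f$, iterate the previous stage to produce $1$-Lipschitz maps $f_n$ on $A\cup\{d_1,\ldots,d_n\}$, and let $f^*:A\cup D\to Y$ be their common extension. Since $A\cup D$ is dense in $X$ and $Y$ is complete, I extend $f^*$ by uniform continuity: for $x\in X$, any sequence $(p_n)\subset A\cup D$ with $p_n\to x$ gives a Cauchy sequence $\bigl(f^*(p_n)\bigr)$ in $Y$; its limit depends only on $x$, and setting $\tilde f(x)$ equal to this limit yields a $1$-Lipschitz map on all of $X$ by taking limits in the inequality $\mt'\bigl(f^*(p_n), f^*(q_n)\bigr)\le \mt(p_n, q_n)$.

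The main obstacle is the second stage, where the finite intersection property combined with compactness of closed balls promotes one-point finite extensions to one-point extensions across arbitrary subsets; separability plays the lighter role of replacing transfinite induction (or Zorn's lemma) by a countable construction in the third stage.
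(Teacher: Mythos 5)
Your proposal is correct and follows essentially the same route as the paper: one-point extensions obtained from the nonempty intersection of closed balls (using compactness), iteration over a countable dense subset of $X$, and a final limit argument relying on completeness of $Y$. The only minor variation is that you invoke the finite intersection property over the whole (possibly uncountable) family $\{K_a\}_{a\in A}$ at once, whereas the paper first passes to a countable dense subset of $A$ and uses a nested intersection; both are valid and the difference is cosmetic.
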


\begin{proof}
The ``only if'' part is obvious. For the ``if'' part, let
$A\subset X$ and $f:A\to Y$ be $1$-Lipschitz. Since $X$ is separable,
$A$ is also separable. Let
$$
\{x_i~:~i\in \N\} \. \subset \. X \quad \text{ and } \quad \{a_i~:~i\in\N\}\. \subset \. A
$$
 be countable dense sets. By the hypothesis, we have
$$
\bigcap _{1 \leq i\leq  n}B_{\mt(x_1, a_i)}(f(a_i))\, \neq \. \emp.
$$
Since closed balls in $Y$ are compact it follows that
$$
\bigcap_{i\in \N}B_{\mt(x_1, a_i)}(f(a_i))\, \neq \. \emp.
$$
Thus $f$ has a $1$-Lipschitz extension to \ts $A\cup\{x_1\}$.
By induction we get that $f$ has a $1$-Lipschitz extension \ts
$\t f: A\cup\{x_i~:~i\in \N\}\to Y$. Let $g: X\to Y$ be the map given by
$$g(x)\. := \. \lim_{j\to \infty}  \. \t f(x_{i_j}) \quad \text{ for all } \ \, x\in X,
$$
where ${x_{i_j}}$ is some sequence such that $\lim_{j\to \infty} x_{i_j}=x$.
The limit above exists since closed balls in $Y$ are compact, and hence $Y$
is a complete metric space. By the continuity of $\t f$ it follows that
$g|_A=f$ and by the continuity of the distance function it follows that
$g$ is $1$-Lipschitz.
\end{proof}

For the proof of Theorem~\ref{thm: rd version}, note that the main property
of $\Z^d$ exploited in proof of Theorem~\ref{theorem: main result}
is that the graph metric is same as the $\ell_1$ metric.  From the lemma
above, the proof proceeds analogously.  We omit the details.

\smallskip

\subsection{Lipschitz constants} \label{ss:ext-Lip}
The following extension deals with other Lipschitz constants.  In the continuous case this is trivial; however it is more delicate in the discrete setting. Since we are interested in Lipschitz maps between
graphs we restrict our attention to integral Lipschitz constants.

\begin{thm}\label{thm: about t lipschitz maps}
Let $t\in \N$ and $H$ be a connected graph. Then every $t$-Lipschitz map
$f: A\longrightarrow H$, $A\subset \Z^d$, has a $t$-Lipschitz extension to
$\Z^d$ if and only if
$$
\text{for all balls }B_1, B_2, \ldots B_{2d} \text{ of radii multiples
of }t\text{ mutually intersect }\Longrightarrow\cap B_i\neq \emp.
$$
\end{thm}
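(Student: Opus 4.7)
The plan is to reduce the statement to Theorem~\ref{thm:Zd Kirszbraun} by rescaling the target graph. Let $\tilde H$ be the simple graph on the vertex set of $H$ with $u \sim v$ whenever $0 < d_H(u,v) \leq t$. Then $\tilde H$ is connected (as $H$ is), and an elementary calculation yields the key distance identity
\[
d_{\tilde H}(u,v) \. = \. \bigl\lceil d_H(u,v)/t \bigr\rceil,
\]
where the $\geq$ direction comes from the fact that a walk of length $k$ in $\tilde H$ lifts to a walk of length $\leq kt$ in $H$, and the $\leq$ direction from chopping an $H$-geodesic into $\lceil d_H(u,v)/t \rceil$ consecutive pieces each of $H$-length at most $t$ and hence each an edge (or a vertex) of $\tilde H$.

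From this identity two correspondences follow. First, for $A \subset \Z^d$, a map $f : A \to H$ is $t$-Lipschitz with respect to $d_H$ if and only if $f : A \to \tilde H$ is $1$-Lipschitz with respect to $d_{\tilde H}$, since
\[
d_H(f(a),f(a')) \. \leq \. t \ts d_{\Z^d}(a,a') \, \iff \, \bigl\lceil d_H(f(a),f(a'))/t \bigr\rceil \. \leq \. d_{\Z^d}(a,a').
\]
Second, balls transform as $B^{\tilde H}_r(v) = B^H_{rt}(v)$ for every $r \in \N$, so closed $\tilde H$-balls of radius $\geq 1$ coincide with the closed $H$-balls whose radius is a positive multiple of $t$.

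Applying Theorem~\ref{thm:Zd Kirszbraun} to $\tilde H$, the property that every $1$-Lipschitz partial map $A \to \tilde H$ with $A \subset \Z^d$ extends to $\Z^d$ is equivalent to $\tilde H$ being $(2d,2)$-Helly. By the two correspondences above this translates, term for term, into the theorem being proved: extendability of $t$-Lipschitz maps into $H$ matches $\Z^d$-Kirszbraunness of $\tilde H$, and pairwise intersection (implying common intersection) of $2d$ closed $H$-balls of radii that are positive multiples of $t$ matches the $(2d,2)$-Helly hypothesis for $\tilde H$. The only step requiring genuine care is the distance identity; once that is in hand the proof is a pure rescaling and no substantive obstacle arises.
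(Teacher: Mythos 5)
Your reduction is correct, but it takes a genuinely different route from the paper. The paper proves this theorem by rerunning the entire inductive argument of Theorem~\ref{thm:Zd Kirszbraun} verbatim, replacing $1$-Lipschitz maps by $t$-Lipschitz maps and balls of radius $\ge 1$ by balls of radii that are positive multiples of $t$ (it states this and omits the details). You instead pass to the $t$-th power graph $\tilde H = H^t$ and invoke Theorem~\ref{thm:Zd Kirszbraun} as a black box. The load-bearing facts are exactly the ones you isolate: the identity $d_{\tilde H}(u,v)=\lceil d_H(u,v)/t\rceil$, the equivalence (valid because $d_{\Z^d}$ is integer-valued) between $t$-Lipschitz maps into $H$ and $1$-Lipschitz maps into $\tilde H$, and the ball correspondence $B^{\tilde H}_r(v)=B^H_{rt}(v)$; all three check out, and $\tilde H$ is simple and connected so the main theorem applies to it. What your approach buys is modularity: nothing about geodesic extensions or the induction on $|A|$ needs to be revisited, and the same rescaling would transfer any future strengthening of the main theorem to the $t$-Lipschitz setting for free. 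The only caveat is interpretive rather than mathematical: ``radii multiples of $t$'' in the statement must be read as \emph{positive} multiples of $t$ for the correspondence with $\tilde H$-balls of radius $\ge 1$ to be exact, which you correctly note.
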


The proof of Theorem~\ref{thm: about t lipschitz maps} follow verbatim
the proof of Theorem~\ref{thm:Zd Kirszbraun}; we omit the details.

\smallskip

Let $H$ be a $\Z^d$-Kirszbraun graph.  The theorem implies that
all $t$-Lipschitz maps $f: A\longrightarrow H$, $A\subset \Z^d$,
have a $t$-Lipschitz extension. On the other hand, it is easy
to construct graphs $G$ and $H$ for which $H$ is $G$-Kirszbraun
but there exists a $2$-Lipschitz map \ts $f: A\longrightarrow H$,
$A\subset G$, which does not have a $2$-Lipschitz extension.
First, we need the following result.

\begin{prop}\label{prop: small diameter Kirszbraun graph}
Let $ G$ be a finite graph with diameter $n$ and $ H$ be a connected
graph such that $B^H_{n}(v)$ is $G$-Kirszbraun for all $v\in H$.
Then $H$ is a $G$-Kirszbraun graph.
\end{prop}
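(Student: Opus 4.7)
The plan is to reduce the Kirszbraun problem for $H$ to one inside a single ball $B^H_n(v)$, and then invoke the hypothesis directly.

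Given $A\subset G$ and a $1$-Lipschitz map $f:A\longrightarrow H$ to be extended to all of $G$, I would first dispose of the trivial case $A=\emp$ by any constant extension. Otherwise, pick an arbitrary reference point $a_0\in A$ and set $v:=f(a_0)\in H$. The key step will be a one-line diameter bound: since $G$ has diameter $n$ and $f$ is $1$-Lipschitz,
$$
d_H\bigl(f(a),v\bigr) \. \leq \. d_G(a,a_0) \. \leq \. n \quad \text{for every } a\in A,
$$
so $f(A)\subset B^H_n(v)$. Regarding $B^H_n(v)$ as a metric subspace of~$H$ (i.e.\ with the restricted metric from $H$), we can then view $f$ as a $1$-Lipschitz map $A\longrightarrow B^H_n(v)$.

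By hypothesis, $B^H_n(v)$ is $G$-Kirszbraun, so $f$ admits a $1$-Lipschitz extension $g:G\longrightarrow B^H_n(v)$. Composing with the isometric inclusion $B^H_n(v)\hookrightarrow H$ then gives the required $1$-Lipschitz extension $g:G\longrightarrow H$ of $f$.

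I do not anticipate any real obstacle here; the sole point to keep straight is the convention for the metric on $B^H_n(v)$, which should be the one inherited from $H$ so that the inclusion is an isometry and $1$-Lipschitz maps into the ball automatically give $1$-Lipschitz maps into~$H$. Under this natural reading the entire argument is a one-line reduction via the diameter estimate together with the hypothesis.
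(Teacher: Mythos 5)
Your proof is correct and is essentially identical to the paper's: pick a basepoint $a\in A$, use the diameter bound together with the $1$-Lipschitz property to conclude $f(A)\subset B^H_n(f(a))$, and invoke the hypothesis on that ball. Your added remark about which metric to place on $B^H_n(v)$ addresses a point the paper leaves implicit, but does not change the argument.
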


\begin{proof}
Let $f: A\subset G\longrightarrow H$ be $1$-Lipschitz and pick $a\in A$. Then \ts Image$(f)\subset B^H_n(f(a))$. Since $B^H_{n}(f(a))$ is $G$-Kirszbraun the
result follows.
\end{proof}

Since trees are Helly graphs, we have as an immediate application of the above that $C_{n}$ is $ G$-Kirszbraun if \ts diam$(G)\leq n-1$.
For instance, let $\br = (1,1,1,1,1,1)\in \N^6$ and consider the \emph{star}~$T_{\br}$.  We obtain that $C_6$ is $T_\br$-Kirszbraun.
 Now label the leaves of $T_\br$ as $b_i$, $1\leq i \leq 6$, respectively. For $A=\{b_1, \ldots, b_6\}\subset T_\br$,
 consider the map
$$f: A \to C_6 \ \ \text{ given by } \ \. f(b_i)=i\ts.
$$
The function $f$ is $2$-Lipschitz but it has no $2$-Lipschitz extension to~$T_\br$.

\smallskip

\subsection{Hyperoctahedron graphs} \label{ss:ext-ex}
The \emph{hyperoctahedron graph} $\OO_d$ is the graph obtained by removing a perfect
matching from the complete graph~$K_{2d}$.   Theorem~\ref{theorem: main result}
combined with the following proposition implies that $\OO_d$ are $\Z^{d-1}$-Kirszbraun
but not $\Z^{d}$-Kirszbraun.  When $d=2$ this is the example in the introduction
(see Figure~\ref{f:Z2}).

\begin{prop}\label{p:cross-pol}
Graph $\OO_d$ is $(2d-1,2)$-Helly but not $(2d,2)$-Helly.
\end{prop}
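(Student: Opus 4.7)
The plan is to work out the combinatorial structure of $\OO_d$ so that both Helly properties reduce to elementary counting. Label the $2d$ vertices as $v_1, v_1', \ldots, v_d, v_d'$ where the removed matching consists of the edges $\{v_i,v_i'\}$, and call $v_i'$ the \emph{antipode} of $v_i$. Since every non-antipodal pair is adjacent, the diameter of $\OO_d$ is $2$, and the balls of positive radius take a particularly simple form: $B^H_1(v) = V(\OO_d)\setminus\{v'\}$, whereas $B^H_r(v) = V(\OO_d)$ for every $r\ge 2$. Consequently, only radius-$1$ balls carry any real information for the Helly property.

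For the ``not $(2d,2)$-Helly'' direction, the plan is to exhibit the family of all $2d$ radius-$1$ balls, namely $B^H_1(v_1), B^H_1(v_1'), \ldots, B^H_1(v_d), B^H_1(v_d')$. Any two of these balls share at least $2d-2 > 0$ vertices, so pairwise intersections are nonempty; but together they exclude every vertex of $\OO_d$ (since $B^H_1(v_i)$ omits $v_i'$ and $B^H_1(v_i')$ omits $v_i$), so the global intersection is empty.

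For the $(2d-1,2)$-Helly direction, I would take any collection of $2d-1$ balls of radius $\ge 1$ with pairwise nonempty intersections. Balls of radius $\ge 2$ equal the whole vertex set and may be discarded, so we may assume they are all of radius $1$, say $B^H_1(u_1),\ldots,B^H_1(u_{2d-1})$. Their intersection equals $V(\OO_d)\setminus\{u_1',\ldots,u_{2d-1}'\}$; since this excluded set has at most $2d-1$ elements while $|V(\OO_d)| = 2d$, the intersection contains at least one vertex, and the pairwise-intersection hypothesis is not even needed. There is no real obstacle here: once one observes that radius-$1$ balls in $\OO_d$ exclude exactly one vertex, the Helly question for a family of $k$ such balls reduces to whether $k < 2d$, and Theorem~\ref{thm:Zd Kirszbraun} then immediately yields that $\OO_d$ is $\Z^{d-1}$-Kirszbraun but not $\Z^d$-Kirszbraun.
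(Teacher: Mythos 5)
Your proof is correct and follows essentially the same route as the paper: both arguments rest on the observation that a ball of radius $\ge 1$ in $\OO_d$ contains all vertices except possibly the antipode of its center, so an intersection of $2d-1$ such balls omits at most $2d-1$ of the $2d$ vertices, while the family of all $2d$ radius-one balls intersects pairwise but has empty total intersection. Your write-up is just a slightly more explicit version of the paper's proof, so there is nothing to add.
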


\begin{proof}
Let \ts $B_1, B_2, \ldots, B_{2d-1}$ \ts be balls of radius $\ge 1$. Then, for all
$1\leq i\leq 2d-1$, $B_i\supset \OO_{d}\setminus\{j_i\}$ for some $j_i\in \OO_{d}$. Thus:
$$
\bigcap \ts B_i \, \supseteq \, \OO_d\setminus\{j_i~:~ 1\leq i \leq 2d-1\} \, \neq \, \emp\ts.
$$
This implies that $\OO_d$ is $(2d-1,2)$-Helly.

In the opposite direction, let \ts $B_1, B_1, \ldots, B_{2d}$ \ts
be distinct balls of radius one in~$\OO_d$.  It is easy to see that they intersect pairwise,
but \ts $\cap B_{i}=\emp$.  Thus, graph $\OO_d$ is not $(2d,2)$-Helly, and
Theorem~\ref{thm:Zd Kirszbraun} proves the claim.
\end{proof}

Let us mention that the hyperoctaheron graph \ts $\OO_d\simeq K_{2,\ldots,2}$ \ts
is a well-known obstruction to the Helly's property, see e.g.~\cite{MR2405677}.

\smallskip

\subsection{Bipartite version} \label{ss:ext-bip}
In the study of Helly graphs it is well-known (see e.g.~\cite[$\S$3.2]{MR2405677}) that results which are true with regard to $1$-Lipschitz
extensions usually carry forward to graph homomorphisms in the bipartite case after some small technical modifications. This is also true in our case.

A bipartite graph $H$ is called \emph{bipartite $(n,m)$-Helly} if for balls $B_1, B_2, B_3, \ldots, B_n$ (if $n\neq \infty$ and any finite collection otherwise)
and partite class $H_1$, we have that
any subcollection of size $m$ among $B_1\cap H_1, B_2\cap H_1, \ldots, B_n\cap H_1$ has a nonempty intersection implies
$$\bigcap_{i=1}^n B_i\cap H_1\neq \emp.$$

Let $G, H$ be bipartite graphs with partite classes $G_1, G_2$ and $H_1,H_2$ respectively. The graph $H$ is called \emph{bipartite $G$-Kirszbraun} if for all
$1$-Lipschitz maps $f: A\subset G\longrightarrow H$ for which $f(A\cap G_1)\subset H_1$ and $f(A\cap G_2)\subset H_2$ there exists $\t f\in \Hom(G, H)$ extending it.

\begin{thm}\label{thm: bipartite version of main theorem}
Graph $H$ is bipartite $\Z^d$-Kirszbraun if and only if $H$ is bipartite $(2d,2)$-Helly.
\end{thm}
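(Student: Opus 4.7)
The plan is to track the proof of Theorem~\ref{thm:Zd Kirszbraun} step by step, replacing $1$-Lipschitz maps by bipartition-preserving $1$-Lipschitz maps (equivalently, graph homomorphisms) and the $(2d,2)$-Helly property by its bipartite analogue. As preparation I would establish bipartite versions of Propositions~\ref{prop:culling of extras} and~\ref{prop: small culled vertices}. The first carries over verbatim: the triangle-inequality argument is insensitive to bipartition, and a bipartition-preserving extension on $\Cull(A,b)\cup\{b\}$ yields a bipartition-preserving extension on $A\cup\{b\}$. The second requires genuine modification: given $b$ in partite class $G_\epsilon$, one seeks $\t f(b)$ inside
$$\bigcap_{b'\in \Cull(A,b)} B^H_{d_G(b,b')}\bigl(f(b')\bigr)\. \cap \. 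H_\epsilon,$$
and invokes bipartite $(2d,2)$-Helly to pass from pairwise to full intersection.

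The main obstacle is verifying pairwise intersection inside $H_\epsilon$. For $a, a' \in \Cull(A,b)$ write $p=d_G(b,a)$, $q=d_G(b,a')$, and $k=d_H(f(a),f(a'))$; bipartition preservation forces $k \equiv p+q \pmod 2$, and one seeks $w \in H_\epsilon$ with $d_H(w,f(a))\leq p$ and $d_H(w,f(a'))\leq q$. If $p \leq k$, a point $w$ on a geodesic from $f(a)$ to $f(a')$ at distance $p$ from $f(a)$ works, and parity forces $w \in H_\epsilon$. If $p > k$ and $q$ is even, $w = f(a') \in H_\epsilon$ itself works. If $p > k$ and $q$ is odd, then $f(a') \notin H_\epsilon$ but the parity equation forces $p \geq k+1$, so any neighbor of $f(a')$ lies in $H_\epsilon$ and within distance $p$ of $f(a)$. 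Together with the symmetric role of $a$ and $a'$, these cases are exhaustive.

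With the bipartite propositions in hand, the ``if'' direction is identical to the induction $\St(n)$ from Theorem~\ref{thm:Zd Kirszbraun}: reduce to $\Cull(A,\m 0)=A$, pick $\mi,\mj\in A$ sharing a coordinate sign, introduce the auxiliary point $\mi - i_k \m e_k$, and apply the bipartite Proposition~\ref{prop:culling of extras} together with the inductive hypothesis. For the ``only if'' direction I would adapt Corollary~\ref{cor: helly and test tree}: given balls $B_i = B^H_{r_i}(v_i)$, $1 \leq i \leq 2d$, whose pairwise intersections with a fixed partite class $H_1$ are nonempty, first decrease each $r_i$ by at most one so that its parity matches the class of $v_i$ relative to $H_1$ (this leaves $B_i \cap H_1$ unchanged), then embed $T_{\br'}$ into $\Z^d$ along the $2d$ coordinate axes with $b_0 \mapsto \m 0$, and apply bipartite $\Z^d$-Kirszbraun to the bipartition-preserving map $b_i \mapsto v_i$; the extension places $\t f(\m 0)$ in $\bigcap_i B_i \cap H_1$.
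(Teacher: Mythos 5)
The paper does not actually write out a proof of this theorem: it is stated as following from the proof of Theorem~\ref{thm:Zd Kirszbraun} ``after some small technical modifications,'' and those modifications are left to the reader. Your proposal supplies exactly the intended argument, and the two places where genuinely new content is required are both handled correctly. The bipartite analogue of Proposition~\ref{prop:culling of extras} is indeed verbatim, and the inductive scheme $\St(n)$ (reduction to $\Cull(A,\m 0)=A$, the auxiliary point $\mi-i_k\m e_k$, the descent on the number of nonzero coordinates) is insensitive to the bipartition since every extension produced along the way lands in the correct partite class. The crux is your parity case analysis for the pairwise intersections inside $H_\epsilon$: using $k\equiv p+q \pmod 2$, the three cases ($p\leq k$ via a geodesic point; $p>k$ with $q$ even via $f(a')$ itself; $p>k$ with $q$ odd via a neighbor of $f(a')$, using $p\geq k+1$) are exhaustive and each verification is sound. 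The converse via the star $T_{\br'}$ embedded along the coordinate half-axes, after trimming each radius to the parity dictated by the class of $v_i$ (which leaves $B_i\cap H_1$ unchanged), is also correct; the only unmentioned degeneracy is a trimmed radius $r_i'=0$, where the conclusion is immediate since the pairwise hypothesis then forces $v_i$ into every $B_j\cap H_1$. In short, this is the ``small technical modification'' the paper alludes to, carried out correctly.
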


As noted in the introduction, Theorem~\ref{theorem: main result} implies that graph $\Z^2$ is not $(4,2)$-Helly. However it is
bipartite $(\infty,2)$-Helly (see below).

\smallskip

Given a graph $H$ we say that $v\sim_H w$ to mean that
$(v,w)$ form an edge in the graph. Let $H_1, H_2$ be graphs with vertex sets $V_1, V_2$ respectively.  Define:
\begin{enumerate}
\item
\emph{Strong product} $H_1\boxtimes H_2$ as the graph with the vertex set $V_1\times V_2$, and edges given by
$$\aligned
(v_1,v_2)\, \sim_{H_1\boxtimes H_2}(w_1, w_2) \quad & \text{ if } \ \ v_1=w_1 \ \ \text{and} \ \  v_2\sim_{H_2}w_2\ts, \\
& \text{ or } \ \ v_1\sim_{H_1}w_1 \ \ \text{and} \ \  v_2=w_2\ts, \\
& \text{ or }\ \ v_1\sim_{H_1}w_1 \ \ \text{and} \ \  v_2\sim_{H_2}w_2\ts.
\endaligned
$$
\item
\emph{Tensor product} $H_1\times H_2$ as the graph with the vertex set $V_1\times V_2$, and edges given by
$$
(v_1,v_2)\sim_{H_1\times H_2}(w_1, w_2)\quad \text{ if } \ \ \. v_1\sim_{H_1}w_1\ \ \text{and} \ \  v_2\sim_{H_2}w_2\ts.
$$
\end{enumerate}

\smallskip

\begin{prop}\label{prop: products} If for a graph $G$, graphs $H_1$ and $H_2$ are $G$-Kirszbraun then $H_1\boxtimes H_2$ is $G$-Kirszbraun. If for a bipartite
graph $G$, bipartite graphs
$H_1'$ and $H_2'$ are bipartite $G$-Kirszbraun then the connected components of $H_1'\times H_2'$ are bipartite $G$-Kirszbraun.
\end{prop}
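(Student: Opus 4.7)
The plan is to reduce both parts to coordinate-wise extension, exploiting the fact that the path metric on each product decomposes through its projections, so a $1$-Lipschitz map into the product is essentially a pair of $1$-Lipschitz maps into the factors. For the strong product one has the standard identity
$$
d_{H_1\boxtimes H_2}\bigl((v_1,v_2),(w_1,w_2)\bigr) \. =\. \max\bigl(d_{H_1}(v_1,w_1),\ts d_{H_2}(v_2,w_2)\bigr),
$$
valid because one can advance in either or both coordinates at each step. Given a $1$-Lipschitz $f:A\subset G\to H_1\boxtimes H_2$, each projection $f_i:=\pi_i\circ f$ is then $1$-Lipschitz into $H_i$, so by hypothesis extends to a $1$-Lipschitz $\t f_i:G\to H_i$, and the pair $\t f:=(\t f_1,\t f_2)$ extends $f$ and remains $1$-Lipschitz by the max formula above.

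For the tensor product case, I would first establish analogous statements within a fixed connected component $H\subseteq H_1'\times H_2'$. Any walk of length $k$ in $H_1'\times H_2'$ projects to walks of length exactly $k$ in each $H_i'$; since bipartiteness forces such a walk between $v_i$ and $w_i$ to exist iff $k\ge d_{H_i'}(v_i,w_i)$ with matching parity, two vertices of the same component must satisfy $d_{H_1'}(v_1,w_1)\equiv d_{H_2'}(v_2,w_2)\Mod{2}$, and then $d_H = \max(d_{H_1'},d_{H_2'})$. Hence projections of a bipartition-respecting $1$-Lipschitz map $f:A\subset G\to H$ are again $1$-Lipschitz into each $H_i'$.

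The main obstacle is the bipartition bookkeeping. With partitions $H_i'=X_i\sqcup Y_i$, the two connected components of $H_1'\times H_2'$ are $(X_1\times X_2)\cup(Y_1\times Y_2)$ and $(X_1\times Y_2)\cup(Y_1\times X_2)$; each is itself bipartite with the obvious splitting, and each coordinate projection carries the bipartition of a component onto $\{X_i,Y_i\}$. So if $f$ respects the bipartition $(G_1,G_2)$ of $G$ against that of $H$, then each $f_i$ respects $(G_1,G_2)$ against that of $H_i'$; by the bipartite $G$-Kirszbraun hypothesis, each $f_i$ extends to a homomorphism $\t f_i\in\Hom(G,H_i')$. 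The pair $\t f:=(\t f_1,\t f_2)$ sends every edge of $G$ to an edge of $H_1'\times H_2'$ by the tensor product edge rule, hence is a graph homomorphism; and since $G$ is connected and $\t f$ agrees with $f$ at any point of $A\subset H$, its image lies in the single component $H$, yielding the required extension.
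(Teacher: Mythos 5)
Your proof is correct and follows essentially the same route as the paper: project onto the two factors, note that the max-formula for distances in the product makes each projection $1$-Lipschitz, extend coordinate-wise, and recombine. The paper dismisses the bipartite/tensor case with ``follows similarly,'' so your careful treatment of the parity and component bookkeeping there is a welcome (and accurate) elaboration rather than a deviation.
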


\begin{proof} We will prove this in the non-bipartite case; the bipartite case follows similarly. Let $f:=(f_1, f_2): A\subset G\longrightarrow H_1\boxtimes H_2$
be $1$-Lipschitz. It follows that the functions $f_1$ and $f_2$ are $1$-Lipschitz as well; hence they have $1$-Lipschitz extensions $\t f_1: G\longrightarrow
H_1$ and $\t f_2: G\longrightarrow H_2$. Thus $(\t f_1, \t f_2): G\longrightarrow H_1\boxtimes H_2$ is $1$-Lipschitz and extends $f$.
\end{proof}

\begin{corollary}[{cf.~\cite[$\S$3.2]{MR2405677}}]
Graph $\Z^2$ is bipartite $(\infty,2)$-Helly.
\end{corollary}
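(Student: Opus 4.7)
The plan is to reduce bipartite $(\infty,2)$-Helly for $(\Z^2,\ell_1)$ to one-dimensional Helly via a $45^\circ$ rotation. Define $\phi:\Z^2\to\Z^2$ by $\phi(a,b)=(a+b,\,a-b)$. The elementary identity $|p|+|q|=\max(|p+q|,|p-q|)$ shows that $\phi$ is an isometry from $(\Z^2,\ell_1)$ onto its image $\{(x,y):x\equiv y\pmod 2\}$ equipped with $\ell_\infty$; equivalently, $\phi$ is a graph isomorphism onto one connected component of the tensor product $\Z\times\Z$. The partite class $H_1=\{(a,b):a+b\text{ even}\}$ is sent to $\{(x,y):x,y\text{ both even}\}$, and $H_2$ to $\{(x,y):x,y\text{ both odd}\}$.

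Under $\phi$, each set $B_i\cap H_1$ becomes a product $I_i\times J_i$, where $I_i,J_i$ are nonempty intervals of even integers (the parity is dictated by the partite class of the center of $B_i$; nonemptiness uses the standing assumption that each radius is $\geq 1$). Since intersections of products equal products of intersections, the pairwise hypothesis $B_i\cap B_j\cap H_1\neq\emp$ becomes: for all $i,j$, $I_i\cap I_j\neq\emp$ and $J_i\cap J_j\neq\emp$. One-dimensional Helly for integer intervals applied separately in each coordinate --- namely, $\sup_i(\text{left endpoint})\leq\inf_i(\text{right endpoint})$ holds globally whenever it holds pairwise --- then gives $\bigcap_i I_i\neq\emp$ and $\bigcap_i J_i\neq\emp$, so $\big(\bigcap_iI_i\big)\times\big(\bigcap_iJ_i\big)$ is a nonempty subset of $\bigcap_i\bigl(\phi(B_i)\cap\phi(H_1)\bigr)$.

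No substantial obstacle arises; the only bookkeeping involves tracking the parities of the intervals $I_i,J_i$ according to the partite class of each center, and verifying that intervals of even (respectively odd) integers obey the same 1D Helly principle as arbitrary integer intervals. As a variant, one could forgo the explicit rotation and invoke Proposition~\ref{prop: products} with $H_1'=H_2'=\Z$: since $\Z$ is a tree it satisfies bipartite $G$-Kirszbraun for every bipartite $G$, and the connected components of $\Z\times\Z$ are graph-isomorphic to $\Z^2$ via $\phi$, so the proposition again delivers the conclusion.
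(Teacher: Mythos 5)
Your argument is correct, but your primary route is genuinely different from the paper's. The paper derives the corollary from its Kirszbraun machinery: trees (hence $\Z$) are Helly, Proposition~\ref{prop: products} then makes the connected components of $\Z\times\Z$ --- which are graph-isomorphic to $\Z^2$ --- bipartite $G$-Kirszbraun for every bipartite $G$, and Theorem~\ref{thm: bipartite version of main theorem} converts this back into the bipartite $(2d,2)$-Helly property for all $d$, hence $(\infty,2)$-Helly. Your ``variant'' in the last sentence is essentially this proof (though you should note it still needs the final pass through Theorem~\ref{thm: bipartite version of main theorem} to land on the Helly statement rather than the Kirszbraun one). Your main argument instead verifies the Helly property directly: the rotation $\phi(a,b)=(a+b,a-b)$ isometrically identifies $(\Z^2,\ell_1)$ with a component of $(\Z^2,\ell_\infty)$, balls intersected with a partite class become boxes $I_i\times J_i$ with $I_i,J_i\subset 2\Z$, and one-dimensional Helly for intervals finishes coordinatewise. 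This is elementary and self-contained --- it avoids both the product proposition and the bipartite main theorem, whose proof the paper only sketches --- at the cost of being specific to $\Z^2$, whereas the paper's route illustrates the general transfer between Kirszbraun and Helly properties. One small imprecision: the elements of $I_i$ and $J_i$ are always even because you intersect with the fixed class $\phi(H_1)=2\Z\times 2\Z$, independently of the partite class of the center of $B_i$; the center's class only affects whether the interval's endpoints land on even integers, which is immaterial once you note (as you do) that radius $\ge 1$ guarantees nonemptiness.
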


\begin{proof}
As we mentioned above, it is easy to see that all trees are Helly graphs.
By Proposition~\ref{prop: products}, so are the connected components of
$\Z\times \Z$ which are graph isomorphic to $\Z^2$.  Now
Theorem~\ref{thm: bipartite version of main theorem} implies the result.
\end{proof}

\bigskip

\section{Complexity aspects}\label{section:recognisability and an application}

\subsection{The recognition problem} Below we  give a polynomial time algorithm
to decide whether a given graph is $\Z^d$-Kirszbraun.  We assume
that the graph is presented by its adjacency matrix.

\begin{prop}\label{prop:recognition}
For all fixed $n,m\in \N$, the recognition problem of
$(n,m)$-Helly graphs and bipartite $(n,m)$-Helly graphs can be
decided in \ts \textrm{poly}$(|H|)$ time.
\end{prop}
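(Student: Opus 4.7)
The plan is to give a direct brute-force algorithm whose running time is polynomial in $|H|$ for any fixed $n,m$. First I would precompute the full distance matrix of $H$ by running BFS from every vertex; this takes $O(|H|^3)$ time and afterwards every ball $B^H_r(v)$ can be written down in $O(|H|)$ time by scanning one row of the matrix. Observe that any ball of radius at least the diameter of $H$ equals all of $H$ and is therefore vacuous in any intersection test, so we may restrict attention to integer radii $1\leq r\leq \mathrm{diam}(H)\leq |H|$. Consequently there are at most $|H|\cdot|H|=|H|^2$ distinct balls of radius $\geq 1$ that can ever appear in the $(n,m)$-Helly condition.

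Next I would enumerate every ordered $n$-tuple $(B_1,\ldots,B_n)$ drawn from this list of at most $|H|^2$ balls; there are at most $|H|^{2n}$ such tuples, which is polynomial in $|H|$ since $n$ is fixed. For each such tuple I would (i) for each of the $\binom{n}{m}$ subsets $S$ of size $m$, check whether $\bigcap_{i\in S}B_i$ is nonempty by scanning the $|H|$ vertices and using the precomputed distance matrix, and (ii) if all these $m$-wise intersections are nonempty, check whether $\bigcap_{i=1}^n B_i$ itself is nonempty by the same scan. The graph $H$ fails to be $(n,m)$-Helly precisely when some tuple passes (i) but fails (ii); if no such tuple exists we report that $H$ is $(n,m)$-Helly. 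Each individual intersection test costs $O(n\cdot|H|)$, and $\binom{n}{m}+1$ tests per tuple is a constant, so the total runtime is $O(|H|^{2n+2})$, which is polynomial as $n,m$ are fixed.

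For the bipartite $(n,m)$-Helly version the algorithm is essentially identical: we first recover the (unique up to swap) bipartition $H=H_1\sqcup H_2$ in linear time by a single BFS, and then for each partite class $H_j$ and each $n$-tuple of balls we perform the same two-stage check, except that every intersection is replaced by its intersection with $H_j$. This only multiplies the cost by a factor of $2$, so the bound remains $\mathrm{poly}(|H|)$. There is no real obstacle here; the only thing one might worry about is whether the range of radii is polynomially bounded, and the observation that balls of radius exceeding $\mathrm{diam}(H)$ may be discarded settles that. (For tighter running times one could prune by noting that in an $(n,m)$-Helly test one may also assume the centers are distinct, but this is not needed for the polynomial-time claim.)
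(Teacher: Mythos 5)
Your proposal is correct and follows essentially the same brute-force approach as the paper: precompute all-pairs distances in $O(|H|^3)$, enumerate all $O(|H|^{2n})$ collections of $n$ balls (centers and radii, with radii bounded by the diameter), and for each collection test the $m$-wise intersections and then the full intersection by scanning vertices, handling the bipartite case by intersecting with a partite class. The only differences are minor bookkeeping in the final exponent, which does not affect the polynomial-time claim.
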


For $n=\infty$ and $m=2$, the recognition problem was solved
in~\cite{MR1029165}; that result does not follow from
the proposition.

\begin{proof}
Let us seek the algorithm in the case of $(n,m)$-Helly graphs; as always,
the bipartite case is similar. In the following, for a function $\tg:\R\to \R$ by
$t=O(\tg(|H|))$ we mean $t\leq k \tg(|H|)$, where $k$ is independent
of $|H|$ but might depend on $m,n$.
\begin{enumerate}
\item
Determine the distance between the vertices of the graph. This takes $O(|H|^3)$ time.
\item
Now make a list of all the collections of balls; each collection being of cardinality~$n$.
Since the diameter of the graph $H$ is bounded by $|H|$; listing
the centers and the radii of the balls takes time $O(|H|^{2n})$.
\item
Find the collections for which all the subcollections of cardinality $m$ intersect.
For each collection, this step takes $O(|H|)$ time.
\item
Check if the intersection of the balls in the collections found in the previous
step is nonempty. This step again takes $O(|H|)$ time.
\end{enumerate}
Thus, the total time is $O(|H|^{2n+3})$, as desired.
\end{proof}

\smallskip

\subsection{The hole filling problem} The following application
is motivated by the tileability problems, see Section~\ref{section: motivation}.

Fix $d\geq 2$. By a \emph{box} $B_n$ in $\Z^d$ we mean a subgraph $\{0,1,\ldots,n\}^d$.
By the \emph{boundary} $\partial_n$ we mean the internal vertex boundary of $B_n$,
that is, vertices of $B_n$ where at least one of the coordinates is either $0$ or~$n$.
The \emph{hole-filling problem} asks: \ts Given a graph $H$ and a graph homomorphism \ts
$f\in \Hom(\partial_n, H)$, does it extend to a graph homomorphism \ts $\t f\in \Hom(B_n,H)$?

\begin{thm}\label{prop: hole filling}
Fix $d\ge 1$. Let $H$ be a finite bipartite $(2d,2)$-Helly graph, $B_n\subset \Z^d$
a box and $f\in \Hom(\partial_n, H)$ be a graph homomorphism as above.  Then the
hole-filling problem for $f$ can be decided in \ts \textrm{poly}$(n+|H|)$ time.
\end{thm}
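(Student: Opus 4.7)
The plan is to reduce the hole-filling question to a purely local Lipschitz check on $\partial_n$, and then exhibit a direct algorithm for that check. The key leverage is Theorem~\ref{thm: bipartite version of main theorem}: since $H$ is bipartite $(2d,2)$-Helly, it is bipartite $\Z^d$-Kirszbraun.

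First I would establish the reduction: the given $f \in \Hom(\partial_n, H)$ extends to some $\t f \in \Hom(B_n, H)$ if and only if $f$, viewed as a map from $\partial_n \subset \Z^d$ into $H$ with the respective path metrics, is $1$-Lipschitz, i.e.\ $d_H(f(u), f(v)) \leq \|u-v\|_1$ for all $u,v \in \partial_n$. The ``only if'' direction is immediate, as every graph homomorphism is $1$-Lipschitz. For the ``if'' direction, note that $f$ respects the bipartition of $\Z^d$ into even- and odd-parity vertices, sending each side into one of the two parts of $H$ (this is forced by $f$ being a homomorphism, up to swapping the parts of $H$). By bipartite $\Z^d$-Kirszbraun, there is a bipartite $1$-Lipschitz extension $g\colon \Z^d \to H$; and any bipartition-respecting $1$-Lipschitz map between bipartite graphs is automatically a graph homomorphism, since any edge $(u,v)$ of $\Z^d$ joins opposite parts, so the images $g(u), g(v)$ lie in opposite parts of $H$ with $d_H(g(u),g(v)) \leq 1$, forcing them to be adjacent. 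Restricting $g$ to $B_n$ then yields the desired $\t f$.

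Given the reduction, I would run the following algorithm: (1) compute all pairwise distances in $H$ by running BFS from each vertex, in time $O(|H|^3)$; (2) for every pair $u, v \in \partial_n$, check whether $d_H(f(u), f(v)) \leq \|u-v\|_1$, which amounts to $O(n^{2(d-1)})$ lookups once the distance matrix of $H$ is precomputed; (3) output yes if and only if all such checks pass. The total running time is $\textrm{poly}(n+|H|)$, as required.

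The main obstacle is really just the reduction step, which is not a technical hurdle but a careful application of the already-proven bipartite Kirszbraun theorem. The one subtle point to keep in mind is that we need to extend $f$ to all of $\Z^d$ (not merely $B_n$), because bipartite $\Z^d$-Kirszbraun is phrased in terms of the ambient $\Z^d$; however, since $B_n$ is an $\ell_1$-geodesically convex subgraph, the $\Z^d$-distances between boundary vertices coincide with their $B_n$-distances, so restricting the global extension back to $B_n$ yields a genuine graph homomorphism agreeing with $f$ on $\partial_n$.
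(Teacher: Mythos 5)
Your proposal is correct and follows essentially the same route as the paper: reduce the hole-filling problem to checking that $f$ is $1$-Lipschitz via the bipartite Kirszbraun theorem, then verify that condition by an all-pairs distance computation in $H$ (cost $O(|H|^3)$) followed by $O(n^{2d-2})$ boundary-pair checks. If anything, you are more explicit than the paper about why a parity-respecting $1$-Lipschitz map into a bipartite graph is automatically a homomorphism and why the $B_n$- and $\Z^d$-distances agree on $\partial_n$.
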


The same result holds in the context of $1$-Lipschitz maps for $(2d,2)$-Helly graphs;
the algorithm is similar. For general~$H$, without the $(2d,2)$-Helly assumption,
the problem is a variation on existence of graph homomorphism, see~\cite[Ch.~5]{HN}.
The latter is famously~\NP-complete in almost all nontrivial cases, which makes the
theorem above even more surprising.

\begin{proof} In the following, for a function $\tg:\R^2\to \R$,  by $t=O(\tg(|H|,n))$
we mean $t\leq k \tg(|H|,n)$ where $k$ is independent of $|H|$ and $n$.
Let $f\in \Hom(\partial_n, H)$ be given. Since $H$ is bipartite $(2d,2)$-Helly graph,
by Theorem~\ref{theorem: main result}, $f$ extends to $B_n$ if and only if
$f$ is $1$-Lipschitz. Thus to decide the hole-filling problem we need to
determine whether or not $f$ is $1$-Lipschitz. This can be decided in
polynomial time:
\begin{enumerate}
\item
Determine the distances between all pairs of vertices in~$H$. This costs $O(|H|^3)$.
\item
For each pair of vertices in the graph $\partial_n$, determine the distance
between the pair and their image under $f$ and verify the Lipschitz condition.
This costs $O(n^{2d-2})$.
\end{enumerate}
The total cost is $O(n^{2d-2}+|H|^3)$, which completes the proof.
\end{proof}

For $d=2$ and $H=\Z$, the above algorithm can be modified to give
a $O(n^2)$ time complexity for the hole-filling problem of an $[n\times n]$
box.  This algorithm can be improved to the nearly linear time
$O(n \log n)$, by using the tools in~\cite{MR3530972}.
We omit the details.

\bigskip

\section{Final remarks and open problems} \label{s:finrem}

\subsection{}
In the view of our motivation, we focus on the $\Z^d$-Kirszbraun property
throughout the paper. It would be interesting to find characterizations
for other bipartite domain graphs such as the hexagonal and the
square-octahedral lattice (cf.~\cite{Ken,Thu}).  Similarly, it would be
interesting to obtain a sharper time bound on the recognition
problem as in Proposition~\ref{prop:recognition}, to obtain applications
similar to~\cite{MR3530972}.

Note that a vast majority of tileability problems are computationally hard,
which makes the search for tractable sets of tiles even more interesting
(see~\cite{Pak-horizons}). The results in this paper, especially the bipartite
versions, give a guideline for such a search.

\subsection{}
As we mention in Section~\ref{section: motivation}, there are
intrinsic curvature properties of the underlying spaces,
which allow for the Helly-type theorems~\cite{langschroder1997}.
In fact, there are more general local hyperbolic properties
which can also be used in this setting, see~\cite{CE,CDV,lang1999}.
See also a curious ``local-to-global'' characterization
of Helly graphs in~\cite{CC+}, and a generalization of
Helly's properties to hypergraphs~\cite{BeD,DPS}.

\subsection{}
In literature, there are other ``discrete Kirszbraun theorems'' stated in different contexts.
For example, papers~\cite{AT,Bre} give a PL-version of the result for finite
$A,B\subset \R^d$ and 1-Lipschitz $f:A\to B$.  Such results are related to the classical
\emph{Margulis napkin problem} and other isometric embedding/immersion problems,
see e.g.~\cite[$\S$38--$\S$40]{Pak} and references therein.

In a different direction, two more related problems are worth mentioning.  
First, the \emph{carpenter's rule problem} can be viewed as a problem of finding 
a ``discrete $1$-Lipschitz homotopy'', see~\cite{CD,MR2007962}.  This is also 
(less directly) related to the well-known \emph{Kneser--Poulsen conjecture}, 
see e.g.~\cite{Bez}.

\bigskip

\subsection*{Acknowledgements} We would like to thank the referee for several useful comments and suggestions. We are grateful to Arseniy Akopyan,
Alexey Glazyrin, Georg Menz, Alejandro Morales and Adam Sheffer
for helpful discussions.  Victor Chepoi kindly read the draft
of the paper and gave us many useful remarks, suggestions and
pointers to the literature.

We thank the organizers of the thematic
school ``Transversal Aspects of Tiling'' at Ol\'{e}ron, France in~2016,
for inviting us and giving us an opportunity to meet and interact
for the first time.  The first author has been funded by ISF grant
Nos.~1599/13, 1289/17 and ERC grant No.~678520.  The second author was
partially supported by the NSF.

\vskip1.1cm



\end{document}